\renewcommand{\subsection}[1]{\vspace{3mm}\refstepcounter{subsection}\noindent{\bf \thesubsection. #1.} }
\newcommand{\np}{\vspace{3mm}\refstepcounter{subsection}\noindent{\bf \thesubsection. }}
\renewcommand{\subsubsection}[1]{\vspace{3mm}\refstepcounter{subsubsection}\noindent{\bf \thesubsubsection. #1.} }
\numberwithin{equation}{section}
\renewcommand{\geq}{\geqslant}
\renewcommand{\leq}{\leqslant}
\newcommand{\Osh}{{\mathcal O}}                        
\renewcommand{\H}{\mathrm{H}}                          
\newcommand{\K}{\mathrm{K}}                            
\newcommand{\kk}{\mathbf{k}}
\newcommand{\Vol}{\operatorname{Vol}}
\newcommand{\KK}{\mathbf{K}}
\newcommand{\FF}{\mathbf{F}}
\newcommand{\PP}{\mathbb{P}} 
\newcommand{\QQ}{\mathbb{Q}} 
\newcommand{\ZZ}{\mathbb{Z}} 
\newtheorem{theorem}{Theorem}[section]
\newtheorem{corollary}[theorem]{Corollary}
\theoremstyle{definition}
\begin{document}

\title{Divisorial instability and Vojta's Main Conjecture for {$\QQ$}-Fano varieties}

\author{Nathan Grieve}
\address{Department of Mathematics, Michigan State University,
East Lansing, MI, USA, 48824}
\email{grievena@msu.edu}%

\begin{abstract} 
We study Diophantine arithmetic properties of birational divisors in conjunction with concepts that surround $\K$-stability for Fano varieties.  There is an interpretation in terms of the barycentres of the Newton-Okounkov bodies.   Our main results show how the property of $\K$-instability, combined with the valuation theoretic characterization thereof, which is made possible by work of K. Fujita and C. Li, implies instances of Vojta's Main Conjecture for Fano varieties.  A main tool in the proof of these results is an arithmetic form of Cartan's Second Main Theorem that has been obtained by M. Ru and P. Vojta.
\end{abstract}
\thanks{\emph{Mathematics Subject Classification (2010):} 14G05; 14G40; 11G50; 11J97; 14C20.}

\maketitle

\section{Introduction}\label{Introduction}

\np
Our purpose here, is to indicate how the barycentres of the Newton-Okounkov bodies, together with \emph{divisorial} criterion for \emph{$\K$-instability} for a given $\QQ$-Fano variety, relate to Vojta's Main Conjecture.  The results that we obtain are made possible by insights of K. Fujita \cite{Fujita:2016a}, \cite{Fujita:2019} and C. Li \cite{Li:2017a}.  Recall, that they build on earlier work of Li and Xu \cite{Li:Xu:2014}.

\np
A key theme in this article is that ideas from toric geometry, K-stability and measures of singularities influence results in the direction of Diophantine approximation.  We refer to \cite{Grieve:2018:autissier}, and the references therein, for a more detailed discussion on these and other topics.  

\np
Here, we find it convenient to express our Diophantine approximation results using the language of birational divisors.  The concept of birational divisor appears in work of V. V. Shokurov (for example \cite{Shokurov:1996}).  It is closely related to the theory of divisorial valuations  and the classification problem in general. 
 
\np
That this birational divisor language is important for arithmetic purposes was noted by P.~Vojta \cite{Vojta:1996}.  This birational  viewpoint from \cite{Vojta:1996} is developed further in \cite{Ru:Vojta:2016}.  

\np
In what follows, we state our arithmetic results in the number field setting.  Similar results hold true in the (characteristic zero) function field setting.  Indeed, these viewpoints are well developed in \cite{Grieve:Function:Fields} and \cite{Grieve:2018:autissier}, for example, building on, and applying, earlier results from \cite{Wang:2004} and \cite{Ru:Wang:2012}.

\np  
In particular, we establish Theorem \ref{Vojta:canonical:Fano:not:K:stable} which shows how the property that a Fano variety is not $\K$-stable implies instances of Vojta's Main Conjecture.  At the same time, it shows how these topics are related to the Newton-Okounkov body, of a given Fano variety, with respect to a suitably defined (divisorial) admissible flag.

\np  Let us now turn to the question of Vojta's Main Conjecture within the context of $\K$-stability for Fano varieties.  Theorem \ref{Vojta:canonical:Fano:not:K:stable} below extends and improves upon existing recent related results (including \cite[Corollary 1.2]{Grieve:2018:autissier} and \cite[Corollary 1.13]{Ru:Vojta:2016}).  For a brief summary of the results from $\K$-stability for Fano varieties, we refer to Section \ref{K:stable:Fano}.  

\begin{theorem}\label{Vojta:canonical:Fano:not:K:stable}
Let $\KK$ be a number field and fix a finite set of places $S$ of $\KK$.  Suppose that $X$ is a $\QQ$-Fano variety with canonical singularities, defined over $\KK$, and which is not $\K$-stable.  Then over $X$ there exists a nonzero, irreducible and reduced effective Cartier divisor $E$, which is defined over some finite extension field $\FF / \KK$, with $\KK \subseteq \FF \subseteq \overline{\KK}$, for which the inequalities predicted by Vojta's Main Conjecture hold true in the following sense.  Let $\mathbb{E}$ be the birational divisor that is determined by $E$.  Let 
$$\mathbb{D} = \mathbb{D}_1+\dots+\mathbb{D}_q$$ 
be a birational divisor over $X$ that has the two properties that: 
\begin{enumerate}
\item[(i)]{
the traces of each of the $\mathbb{D}_i$ are linearly equivalent to the trace of $\mathbb{E}$ on some fixed normal proper model $X'$ of $X$, defined over $\FF$; and
}
\item[(ii)]{
the traces of each of these divisors $\mathbb{D}_i$, for $i = 1,\dots,q$, intersect properly on this model $X'$.
}
\end{enumerate}
Let $B$ be a big line bundle on $X$ and let $\epsilon > 0$.  Then the inequality
\begin{equation}\label{Vojta:Inequality:Eqn:intro:b}
\sum_{v \in S} \lambda_{\mathbb{D},v}(x) + h_{\K_X}(x) \leq \epsilon h_B(x) + \mathrm{O}(1)
\end{equation}
is valid for all $\KK$-rational points $x \in X(\KK) \setminus Z(\KK)$ and $Z \subsetneq X$ some proper Zariski closed subset defined over $\KK$.
\end{theorem}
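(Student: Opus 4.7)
The plan is to couple the valuation-theoretic characterisation of $\K$-instability due to K.\ Fujita and C.\ Li with the arithmetic Second Main Theorem of M.\ Ru and P.\ Vojta. The hinge is that, for the polarisation $L = -K_X$, Fujita's $S$-invariant $S(E)$ and Ru--Vojta's $\beta$-constant $\beta(-K_X, E) = \Vol(-K_X)^{-1}\int_0^\infty \Vol(-K_X - tE)\,dt$ coincide, and both compute the first barycentre coordinate of the Newton--Okounkov body of $-K_X$ with respect to an admissible flag based at $E$.

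First, I select the destabilising divisor. Since $X$ is not $\K$-stable, the Fujita--Li criterion produces an irreducible and reduced prime divisor $E$, defined over some finite extension $\FF/\KK$ and situated on a suitable birational model of $X$, with $A_X(E) - S(E) \leq 0$. As $X$ has canonical singularities, $A_X(E) \geq 1$; hence $S(E) \geq 1$, equivalently $\beta(-K_X, E) \geq 1$. I take this $E$ as the divisor in the statement and let $X'$ be a normal proper model of $X$ on which $E$ descends to an irreducible Cartier divisor, which serves as the trace of the birational divisor $\mathbb{E}$.

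Next, I apply the arithmetic SMT on $X'$. By hypothesis the effective divisors $D_1, \ldots, D_q$ (traces of the $\mathbb{D}_i$) are linearly equivalent to the trace of $\mathbb{E}$ and intersect properly. Because $\beta(-K_X, \cdot)$ depends only on the numerical equivalence class, $\beta(-K_X, D_i) = S(E)$ for every $i$. The Ru--Vojta arithmetic SMT, in the birational-divisor form used in \cite{Ru:Vojta:2016} and \cite{Grieve:2018:autissier}, then yields, for any $\delta > 0$, a proper Zariski closed subset $Z_\delta \subsetneq X$ and an estimate
\begin{equation*}
\sum_{v \in S} \lambda_{\mathbb{D}, v}(x) \leq \frac{1+\delta}{S(E)}\, h_{-K_X}(x) + \mathrm{O}(1)
\end{equation*}
valid for all $x \in X(\KK) \setminus Z_\delta(\KK)$. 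Subtracting $h_{-K_X} = -h_{K_X}$ gives
\begin{equation*}
\sum_{v \in S} \lambda_{\mathbb{D}, v}(x) + h_{K_X}(x) \leq \left(\frac{1+\delta}{S(E)} - 1\right) h_{-K_X}(x) + \mathrm{O}(1).
\end{equation*}
With $S(E) \geq 1$ the bracketed coefficient is at most $\delta$; since $-K_X$ is ample and $B$ is big, after enlarging $Z_\delta$ to include the augmented base locus of a suitable $nB - (-K_X)$ one has $h_{-K_X}(x) \leq C\, h_B(x) + \mathrm{O}(1)$ for some $C > 0$. Choosing $\delta = \epsilon/C$ at the outset then produces \eqref{Vojta:Inequality:Eqn:intro:b}.

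The principal technical task is the identification of the Weil functions $\lambda_{\mathbb{D}, v}$ of the birational divisor $\mathbb{D}$ with those of the traces $D_1, \ldots, D_q$ on $X'$ up to bounded error, and the verification that the hypotheses of the Ru--Vojta arithmetic SMT are actually met in the birational-divisor framework. Once that reconciliation is in place, the independence of $\beta(-K_X, D_i)$ of $i$ (a consequence of the shared linear equivalence class) together with its coincidence with Fujita's $S$-invariant $S(E)$ makes the concluding inequality essentially automatic.
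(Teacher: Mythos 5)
Your proposal is correct and follows essentially the same route as the paper: combine the Fujita--Li valuative criterion (yielding $\beta(-\K_X,E)\geq 1$ from $\K$-instability plus canonical singularities, via the log-discrepancy $A_X(E)\geq 1$) with the Ru--Vojta arithmetic SMT applied on the model $X'$ where the traces are linearly equivalent and intersect properly, then pass from $h_{-\K_X}$ to $h_B$. The paper isolates the second step as its Theorem \ref{Vojta:canonical:Fano:not:D:stable} and the first as Corollary \ref{canonical:Fano:Not:K:Stable}, but the logic and the key inequalities are the ones you wrote down; the $\frac{1+\delta}{S(E)}$ versus $\frac{1}{\beta(-\K_X,E)}+\epsilon$ formulations of the SMT are interchangeable for this purpose.
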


In \eqref{Vojta:Inequality:Eqn:intro:b}, $\lambda_{\mathbb{D},v}(\cdot)$ is the \emph{birational Weil function} of $\mathbb{D}$ with respect to the place $v \in S$.  

\np
Theorem \ref{Vojta:canonical:Fano:not:K:stable} is established in Section \ref{Vojta:Instability:Section}.  It is a consequence of Theorem  \ref{Vojta:canonical:Fano:not:D:stable} together with the valuative criteria for $\K$-stability that has been established by Fujita \cite{Fujita:2016a}, \cite{Fujita:2019} and Li \cite{Li:2017a}. To illustrate the conclusions of Theorems \ref{Vojta:canonical:Fano:not:K:stable} and \ref{Vojta:canonical:Fano:not:D:stable}, we mention one special case of Theorem \ref{Vojta:canonical:Fano:not:D:stable}, which, when applied to projective $n$-space, in particular, recovers the familiar inequalities which are implied by the Second Main Theorem of Schmidt.

\begin{corollary}\label{SecondMain:Vojta:Fano:not:K:stable}
Suppose that a nonsingular $\QQ$-Fano variety $X$ is destabilized by a prime divisor $E \subseteq X$ having field of definition some finite extension $\FF$ of the base number field $\KK$.  Let 
$$D = D_1 + \dots + D_q$$ 
be a divisor on $X$, defined over $\FF$, which has the two properties that:
\begin{enumerate}
\item[(i)]{
each of the $D_i$, for $i = 1,\dots, q$, is linearly equivalent to $E$; and
}
\item[(ii)]{
the divisors $D_1,\dots,D_q$ intersect properly.
}
\end{enumerate}
Fix a finite set of places $S$ of $\KK$, let $B$ be a big line bundle on $X$ and fix $\epsilon > 0$.  Then there exists a proper Zariski closed subset $Z \subsetneq X$ so that the inequality
$$
\sum_{v \in S} \lambda_{D,v}(x) + h_{\K_X}(x) \leq \epsilon h_B(x) + \mathrm{O}(1)
$$
holds true for all $x \in X(\KK) \setminus Z(\KK)$.
\end{corollary}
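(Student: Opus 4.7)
The plan is to deduce this corollary directly from Theorem \ref{Vojta:canonical:Fano:not:K:stable} by specializing from the birational divisor formalism to honest Cartier divisors on the model $X$ itself. First, since $X$ is nonsingular it trivially has canonical singularities, and since it is destabilized by the prime divisor $E$ it fails to be $\K$-stable; thus all the hypotheses of Theorem \ref{Vojta:canonical:Fano:not:K:stable} are satisfied, and the destabilizing divisor produced there may be identified with the $E$ at hand, which is defined over $\FF$.

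I would then pass to birational divisors: to each Cartier divisor $D_i$ on $X$ associate the birational divisor $\mathbb{D}_i$ that it canonically determines, and likewise let $\mathbb{E}$ be the birational divisor determined by $E$. Taking $X' = X$ (base changed to $\FF$ if necessary) as the normal proper $\FF$-model appearing in Theorem \ref{Vojta:canonical:Fano:not:K:stable}, the trace of $\mathbb{D}_i$ on $X'$ is $D_i$ and the trace of $\mathbb{E}$ on $X'$ is $E$. Hypothesis (i) of the theorem --- linear equivalence of the traces of the $\mathbb{D}_i$ with the trace of $\mathbb{E}$ --- then reduces verbatim to the present hypothesis (i), and hypothesis (ii) --- proper intersection of the traces on $X'$ --- reduces verbatim to the present hypothesis (ii).

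Applying Theorem \ref{Vojta:canonical:Fano:not:K:stable} to the birational divisor $\mathbb{D} = \mathbb{D}_1 + \dots + \mathbb{D}_q$ then produces
$$
\sum_{v \in S} \lambda_{\mathbb{D},v}(x) + h_{\K_X}(x) \leq \epsilon h_B(x) + \mathrm{O}(1)
$$
for all $x \in X(\KK) \setminus Z(\KK)$ and some proper Zariski closed $Z \subsetneq X$. The step requiring the most care --- and the only one that is not a pure transcription of definitions --- will be the comparison between the birational Weil functions $\lambda_{\mathbb{D}_i,v}(\cdot)$ and the classical Weil functions $\lambda_{D_i,v}(\cdot)$. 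Because each $\mathbb{D}_i$ is induced from an actual Cartier divisor on the smooth proper model $X$, these two functions agree up to $\mathrm{O}(1)$ at points of $X(\KK)$; this is built into the framework of birational Weil functions from \cite{Ru:Vojta:2016} and reflects the functoriality of local heights under proper birational maps. Summing this comparison over $i = 1,\dots,q$ and $v \in S$ and substituting into the displayed inequality yields the inequality asserted in the corollary.
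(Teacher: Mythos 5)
There is a genuine logical gap in your choice of source theorem. Theorem~\ref{Vojta:canonical:Fano:not:K:stable} is an \emph{existence} statement: it produces \emph{some} nonzero irreducible reduced Cartier divisor $E$ over $X$ for which the Vojta-type inequality holds; it does not say the conclusion holds for every destabilizing divisor. You write that ``the destabilizing divisor produced there may be identified with the $E$ at hand,'' but nothing in the statement of Theorem~\ref{Vojta:canonical:Fano:not:K:stable} licenses that identification --- you are given a particular $E \subseteq X$ in the corollary, and the theorem only gives you an unspecified one. What you actually need is a result quantified over all suitable $E$, and that is precisely Theorem~\ref{Vojta:canonical:Fano:not:D:stable}: it applies to \emph{any} reduced, irreducible Cartier divisor $E$ over $X$ satisfying $\beta(-\K_X,E) \geq 1$.

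The paper's proof goes through Theorem~\ref{Vojta:canonical:Fano:not:D:stable}. Since $X$ is nonsingular and $E$ is a prime divisor on $X$ itself, $a(X,E) = 0$, so the hypothesis that $E$ destabilizes $X$ reduces to $\beta(-\K_X,E) \geq 1$, which is exactly the input needed. Conditions (i) and (ii) of Theorem~\ref{Vojta:canonical:Fano:not:D:stable} are then verified with $X' = X$ in the way you describe. Your observation about the comparison between birational and classical Weil functions --- that when the birational divisor comes from an honest Cartier divisor on $X$ the two agree up to $\mathrm{O}(1)$ --- is correct and a useful point the paper leaves implicit, but it does not repair the misattribution above. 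Replacing the invocation of Theorem~\ref{Vojta:canonical:Fano:not:K:stable} with Theorem~\ref{Vojta:canonical:Fano:not:D:stable}, and citing $\beta(-\K_X,E) \geq 1$ as the hypothesis you are handing it, would make your argument correct and essentially identical to the paper's.
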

\begin{proof}
Indeed, by assumption, we have that 
$\beta(-\K_X, E) \geq 1 \text{.}$  
Moreover,  the two conditions (i) and (ii), in Theorem \ref{Vojta:canonical:Fano:not:D:stable}, hold true with 
$X' = X \text{.}$  
Thus the conclusion desired by Corollary \ref{SecondMain:Vojta:Fano:not:K:stable} is evidently implied by that of Theorem \ref{Vojta:canonical:Fano:not:D:stable}.
\end{proof}

\np
As some additional comments which illustrate both the hypothesis and conclusions of Theorem \ref{Vojta:canonical:Fano:not:K:stable} and Corollary \ref{SecondMain:Vojta:Fano:not:K:stable}, suppose that $E \subseteq X$ is a nonzero irreducible reduced effective Cartier divisor on a $\QQ$-Fano variety $X$ and having field of definition the base number field $\KK$.  In this context, note that if 
\begin{equation}\label{divisorial:unstable:inequality}
\beta(-\K_X,E) := \int_0^{\infty} \frac{\Vol_X(-\K_X - t E) }{ \Vol_X(-\K_X) } \mathrm{d} t \geq 1 \text{,}
\end{equation}
then the divisor $-\K_X - E$ is big \cite[Lemma 9.4]{Fujita:2016a}.  In particular, it follows that
$$
\sum_{v \in S} \lambda_{E,v}(\cdot) + h_{\K_X}(\cdot) \leq \mathrm{O}(1)
$$
for all finite sets of places $S$ of $\KK$.  On the other hand, in general, the conclusion of Theorem \ref{Vojta:canonical:Fano:not:K:stable} can be seen by analogy with the \emph{Roth type inequalities} which are given by \cite[Theorem 10.1]{McKinnon-Roth}.  Further, the inequality \eqref{divisorial:unstable:inequality}, for prime divisors $E$ with support on $X$, is very much related to the concept of \emph{divisorial stability} as defined in \cite[Definition 1, p. 542]{Fujita:2016a}.

\np  
Finally, recall that
Theorem \ref{Vojta:canonical:Fano:not:K:stable} is a consequence of the more general Theorem \ref{Vojta:canonical:Fano:not:D:stable} which can be seen as the main result of this article.

\np {\bf Notations and other conventions.} Unless explicitly stated otherwise, we let $\KK$ be a number field and $\FF / \KK$ a finite field extension, with $\KK \subseteq \FF \subseteq {\overline{\KK}}$, for $\overline{\KK}$ a fixed algebraic closure of $\KK$.  By a \emph{variety} over a fixed base field $\kk$, we mean a reduced projective scheme over $\operatorname{Spec}(\kk)$.  By a \emph{model} of a normal variety $X$, we mean a proper birational morphism $\pi \colon X' \rightarrow X$ from a normal variety $X'$.  When no confusion is likely, we omit explicit mention of fields of definition for such models $X'$.  By a \emph{$\QQ$-Fano variety $X$} over $\KK$, we mean that $X$ is a geometrically normal, geometrically irreducible projective variety, defined over $\KK$, having the property that $X_{\overline{\KK}}$ has at most \emph{log-terminal singularities} and, finally, having the property that the \emph{anti-canonical divisor} $-\K_X$ is an ample $\QQ$-Cartier divisor.

\np {\bf Acknowledgements.}  I thank 
Steven Lu, Min Ru, Chi Li, Mike Roth, Daniel Greb, Mattias Jonsson, Julie Wang, Julien Keller, Aaron Levin, Gordon Heier 
and many other 
colleagues for their interest and discussions on related topics.  This work benefited from visits to CIRGET, Montreal, NCTS, Taipei, and the Institute of Mathematics Academia Sinica, Taipei, during the Summers of 2018 and 2019.  It was written while I was a postdoctoral fellow at Michigan State University.  
Finally, I thank an anonymous referee for carefully reading this work and for offering helpful suggestions.

\section{
Preliminaries about $\K$-stability for $\QQ$-Fano varieties
}\label{K:stable:Fano}

\np  In this section, we recall the valuative criterion for $\K$-stability for a given $\QQ$-Fano variety $X$.  Recall, that this criterion is made possible by work of Fujita \cite{Fujita:2016a}, \cite{Fujita:2019}, Li \cite{Li:2017a} and building on earlier work of Li and Xu \cite{Li:Xu:2014}.

\np  Our main interest is when $X$ has canonical singularities and is not $\K$-stable.  Indeed, within this context, Theorem \ref{Fano:K:stability}  implies instances of Vojta's Main Conjecture.  With that in mind, and for later use, here we record Corollary \ref{canonical:Fano:Not:K:Stable} which is deduced immediately from the work of Fujita and Li in the form that we state as Theorem \ref{Fano:K:stability}.

\np  In general, the extent to which such a $\QQ$-Fano variety is $\K$-stable is not obvious.  Fix a prime divisor $E$ over $X$ with field of definition $\FF$ a finite extension of $\KK$ contained in $\overline{\KK}$.  Then $E$  determines a birational divisor $\mathbb{E}$ over $X$ as well as a filtration of the anti-canonical ring.  Denote these respective objects by $\mathbb{E}$ and $\mathcal{F}^\bullet R(X, - \K_X)$.  

\np  
Recall, that the filtration $\mathcal{F}^\bullet R(-\K_X)$ is defined over $\FF$.  It turns out that 
\begin{equation}\label{divisorial:asymptotic}
 \beta(-\K_X,E) := \int_0^\infty \frac{\Vol_{X'}\left( - \pi^* \K_X - t E  \right)}{ \Vol_X \left( -\K_X \right)  } \mathrm{d}t \text{,}
\end{equation}
the \emph{asymptotic volume constant} of $(X, - \K_X)$ with respect to $E$, is related to the property that $X$ is $\K$-stable.  Furthermore, by work of K. Fujita \cite{Fujita:2016a}, \cite{Fujita:2019} and C. Li \cite{Li:2017a} (and others), it is known that $\K$-stability for the polarized variety $(X, - \K_X)$ can be characterized in terms of the \emph{log discrepancies} together with the quantity \eqref{divisorial:asymptotic}.  We state their result as Theorem \ref{Fano:K:stability} below.  Note that in \eqref{divisorial:asymptotic}, we have fixed a model $\pi \colon X' \rightarrow X_{\FF}$ which has the property that $E \subseteq X'$ is a Cartier divisor.

\np  
Recall, that $\mathbb{E}$  is the (prime) birational divisor that is determined by $E$.  Fix a normal proper model 
$\pi \colon X' \rightarrow X_{\FF}$ 
for which $\mathbb{E}$ has trace a Cartier divisor
$\mathbb{E}_{X'} = E \text{.}$
The \emph{discrepancy} of $E$ is the well-defined quantity
\begin{equation}\label{discrepancy:eqn:defn}
a(X,E)  := \operatorname{ord}_{\mathbb{E}_{X'}}(\K_{X'/ X}).
\end{equation}
In \eqref{discrepancy:eqn:defn}, the divisor $\K_{X' / X}$ denotes the \emph{relative canonical divisor}
$$
\K_{X' / X} := \K_{X'} - \pi^*\K_X \text{.}
$$
Recall, that $X$ having \emph{log-terminal singularities} means that all such discrepancies are strictly greater than minus one; the condition that $X$ has \emph{canonical singularities} means that these quantities are all nonnegative.  

\np  
By reformulating results which were established by Fujita, \cite{Fujita:2016a}, \cite{Fujita:2019} and Li, \cite{Li:2017a}, the condition that a $\QQ$-Fano variety $X$ be $\K$-stable can be expressed as Theorem \ref{Fano:K:stability} below.  Before stating this result, using terminology that is consistent with \cite[Definition 1.3]{Fujita:2019}, we say that a Cartier divisor $E$ over $X$, supported on some normal proper model 
$$
\pi \colon Y \rightarrow X_\FF \text{, }
$$
and having field of definition $\FF$, is \emph{dreamy} if the bigraded algebra
$$
\bigoplus_{m,\ell \in \ZZ_{\geq 0}} \H^0\left(Y, - \pi^* rm \K_X - \ell E\right)
$$
is finitely generated for some $r \in \ZZ_{\geq 0}$,  which has the property that $-r \K_X$ is a Cartier divisor.

\np  
Having fixed such terminology, the characterization of $\K$-stability that was established in \cite{Fujita:2019} may be formulated in the following manner.

\begin{theorem}[{\cite[Theorem 1.6]{Fujita:2019}}; see also {\cite[Theorem 3.7]{Li:2017a}}]
\label{Fano:K:stability}
A $\QQ$-Fano variety $(X,-\K_X)$ is not $\K$-stable if and only if the inequality
\begin{equation}\label{Fano:K:unstable:eqn}
a(X,E) \leq \beta(-\K_X, E) - 1
\end{equation}
holds true for at least one such dreamy prime divisor $E$ over $X$, defined over some finite extension of the base number field $\KK$.
\end{theorem}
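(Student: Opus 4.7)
The plan is to deduce this characterization from the existing body of work on valuative criteria for $\K$-stability, following the strategy of Fujita and Li. I would reformulate the failure of $\K$-stability as the existence of a test configuration with non-positive (generalized) Donaldson-Futaki invariant, and then translate such test configurations into the language of dreamy prime divisors for which the arithmetic combination $a(X,E) + 1 - \beta(-\K_X, E)$ is non-positive.

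First, recall the standard definition: $(X, -\K_X)$ is $\K$-stable precisely when every non-trivial test configuration of $(X, -\K_X)$ has strictly positive Donaldson-Futaki invariant. The first main input is the reduction due to Li and Xu \cite{Li:Xu:2014}, which shows that it suffices to test this condition on special test configurations, i.e., those whose central fibre is itself a $\QQ$-Fano variety. The second main input, due to Fujita, associates to each dreamy prime divisor $E$ over $X$ a special test configuration $(\mathcal{X}, \mathcal{L})$ of $(X, -r\K_X)$ constructed from the Rees algebra of the filtration $\mathcal{F}^\bullet R(X, -\K_X)$; the dreaminess assumption is precisely what ensures the required finite generation needed for this to define a genuine test configuration. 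Conversely, every special test configuration arises from such a divisor up to normalization and base change. Combining this correspondence with the computation of the Donaldson-Futaki invariant carried out in \cite{Fujita:2016a, Fujita:2019, Li:2017a} gives, up to a positive normalizing constant, the identity
\begin{equation*}
\mathrm{DF}(\mathcal{X},\mathcal{L}) = c \bigl(a(X,E) + 1 - \beta(-\K_X, E)\bigr)\text{,}
\end{equation*}
with $c > 0$ depending only on intersection numbers, and hence $\mathrm{DF} \leq 0$ translates into the inequality \eqref{Fano:K:unstable:eqn}.

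The primary obstacle is the last computation. It requires a careful two-variable asymptotic expansion of $\H^0\bigl(Y, -rm\pi^*\K_X - \ell E\bigr)$, an identification of the leading coefficients with the volume function appearing in $\beta(-\K_X, E)$, and tracking of the relative canonical contribution on the central fibre which supplies the discrepancy term $a(X,E)$. The field-of-definition aspect is largely formal: asymptotic and intersection-theoretic quantities are preserved under base-change to any finite extension $\FF / \KK$, so permitting $E$ to be defined over such $\FF$ introduces no essential difficulty beyond bookkeeping.
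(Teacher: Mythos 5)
Your proposal attempts to reprove Fujita's theorem from scratch, but the paper does no such thing: the result is stated with a citation (Fujita \cite[Theorem~1.6]{Fujita:2019}, Li \cite[Theorem~3.7]{Li:2017a}), and the paper's entire ``proof'' consists of observing that the present statement is the logical negation of Fujita's, once one accounts for the dictionary between the log discrepancy $A(X,E) = a(X,E)+1$ used by Fujita and the classical discrepancy $a(X,E)$ adopted here. That is, Fujita proves ``$X$ is $\K$-stable iff $\beta(-\K_X,E) < A(X,E)$ for every dreamy prime $E$,'' and negating plus substituting $A = a+1$ gives exactly \eqref{Fano:K:unstable:eqn}. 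You never mention this normalization discrepancy, which is the one genuinely non-vacuous point in the paper's argument.

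As a sketch of Fujita's own proof, your outline does name the right ingredients --- the Li--Xu reduction to special test configurations, the Rees-algebra construction of a test configuration from the filtration attached to a dreamy divisor, and the identity expressing the Donaldson--Futaki invariant as a positive multiple of $A(X,E) - \beta(-\K_X,E)$. But you explicitly defer what you call ``the primary obstacle'' (the two-variable asymptotics identifying the leading terms of $\dim \H^0(Y, -rm\pi^*\K_X - \ell E)$ with $\Vol$ and the discrepancy contribution), so as written this is a plan for a proof rather than a proof. For the statement as it appears in this paper, the correct and complete move is simply to cite Fujita and Li and record the $a \leftrightarrow A$ translation; reproducing their Donaldson--Futaki computation is both unnecessary and, in your version, unfinished.
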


\begin{proof}
In \cite[Theorem 1.6]{Fujita:2019}, it is shown that such a $\QQ$-Fano variety $X$ is $\K$-stable if and only if 
$$\beta(-\K_X, E) < a(X, E) + 1$$ 
for all such dreamy prime divisors $E$.  (Note that the condition there is expressed in terms of the log discrepancies as opposed to the traditional concept of discrepancies, \cite{Kollar:Mori:1998}, which we adopt here.)  In particular, Theorem \ref{Fano:K:stability}, as stated here, is simply an equivalent formulation of \cite[Theorem 1.6]{Fujita:2019}.
\end{proof}

\np  
Theorem \ref{Fano:K:stability} has the following consequence which we use in the course of proving Theorems  \ref{Vojta:canonical:Fano:not:K:stable} and \ref{Vojta:canonical:Fano:not:D:stable}.

\begin{corollary}\label{canonical:Fano:Not:K:Stable}
Suppose that $X$ is a $\QQ$-Fano variety with canonical singularities.  If $X$ is not $\K$-stable, then
\begin{equation}\label{canonical:Fano:K:unstable:eqn}
\beta(-\K_X, E) \geq 1 
\end{equation}
for at least one prime divisor $E$ over $X$ and having field of definition some finite extension of the base number field $\KK$.
\end{corollary}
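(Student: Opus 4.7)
The plan is to simply combine Theorem \ref{Fano:K:stability} with the definition of canonical singularities. First I would invoke Theorem \ref{Fano:K:stability}: since $X$ is assumed not to be $\K$-stable, there exists at least one dreamy prime divisor $E$ over $X$, defined over a finite extension of $\KK$, which satisfies the inequality
\begin{equation*}
a(X,E) \leq \beta(-\K_X, E) - 1 \text{.}
\end{equation*}
In particular, such a prime divisor $E$ over $X$ exists.

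Next, I would use the standing hypothesis that $X$ has canonical singularities. Recalling the convention fixed in the paper (the discrepancies $a(X,E) = \ord_{\mathbb{E}_{X'}}(\K_{X'/X})$ are all nonnegative for $X$ canonical), this hypothesis applies to every prime divisor over $X$, and in particular to the divisor $E$ produced in the previous step. Hence
\begin{equation*}
0 \leq a(X,E) \text{.}
\end{equation*}

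Combining the two inequalities yields $0 \leq a(X,E) \leq \beta(-\K_X,E) - 1$, and therefore $\beta(-\K_X, E) \geq 1$ for this prime divisor $E$, which is the inequality \eqref{canonical:Fano:K:unstable:eqn} asserted by the corollary. Since $E$ is supplied by Theorem \ref{Fano:K:stability} as a dreamy prime divisor over $X$ defined over a finite extension of $\KK$, its field of definition is as required. There is no genuine obstacle here: the whole argument is a one-line reduction, and the only delicate point to state carefully is the compatibility of the sign convention for discrepancies used in the paper with the log-discrepancy formulation of \cite[Theorem 1.6]{Fujita:2019}, which has already been addressed in the proof of Theorem \ref{Fano:K:stability}.
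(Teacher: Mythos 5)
Your proof is correct and follows exactly the same route as the paper: apply Theorem \ref{Fano:K:stability} to extract a dreamy prime divisor $E$ with $a(X,E) \leq \beta(-\K_X,E)-1$, then use the canonical-singularities hypothesis $a(X,E) \geq 0$ to conclude $\beta(-\K_X,E) \geq 1$. Nothing to add.
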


\begin{proof}
If $X$ has canonical singularities, then 
\begin{equation}\label{canonical:singularities:hypothesis}
a(X,E) \geq 0
\end{equation} 
and so Corollary \ref{canonical:Fano:Not:K:Stable} follows from Theorem \ref{Fano:K:stability} because combining \eqref{Fano:K:unstable:eqn} with \eqref{canonical:singularities:hypothesis}, we obtain the desired inequality \eqref{canonical:Fano:K:unstable:eqn}.
\end{proof}

\np  
We conclude this section by mentioning three well known examples which illustrate Theorem \ref{Fano:K:stability} and Corollary \ref{canonical:Fano:Not:K:Stable} by direct methods.

\noindent{\bf Examples.} 
\begin{enumerate}
\item[(i)]{
If $X = \PP^n$ and $H$ is a hyperplane, then it follows directly from the definition given in \eqref{divisorial:asymptotic} that
$$\beta(-\K_{X}, H) = 1.$$  
Thus, $X = \PP^n$ is a $\QQ$-Fano variety which is not divisorially stable along $H$.
}
\item[(ii)]{
If $X = \PP^1 \times \PP^1$ and $E$ an exceptional divisor that is obtained by blowing-up a closed point, then 
$$-\K_X = \Osh_X(2,2)$$ 
and so 
$$\beta(-\K_X, E) = 2$$ 
by \cite[Example before Lemma 4.1]{McKinnon-Roth}.  
}
\item[(iii)]{
Let $E_1,\dots, E_q$ be effective Cartier divisors on a $d$-dimensional $\QQ$-Fano variety $X$, defined over $\KK$, and in general position.  Suppose further, that each of the $E_i$ are linearly equivalent to a fixed ample divisor on $X$ and that 
$$-\K_X = E_1 + \dots + E_q.$$  
Then, combining the discussion that follows the Analytic General Theorem of \cite{Ru:Vojta:2016} together with \cite[Proposition 3.1]{Grieve:2018:autissier}, we obtain that 
$$\beta(-\K_X,E_i) = \frac{q}{d + 1}.$$  
}
\end{enumerate}

\section{Arithmetic preliminaries}

\np  
In this article, our main interest in divisorial criterion for $\K$-instability for $\QQ$-Fano varieties is because of its relation to Vojta's Main Conjecture (see Theorems \ref{Vojta:canonical:Fano:not:K:stable} and \ref{Vojta:canonical:Fano:not:D:stable}).  The key points to obtaining these observations, are results that were proved in \cite[Corollary 1.3]{Grieve:2018:autissier} and, independently, in \cite[Corollary 1.13]{Ru:Vojta:2016}.   
We state our result in Section \ref{Vojta:Instability:Section} and, there, we find it convenient to express our result using the birational Weil function formalism.

\np
The main point of this section, is to make precise our arithmetic preliminaries.  At the same time, we develop further the basic theory of birational Weil functions that has been given by Ru and Vojta, in \cite{Ru:Vojta:2016}, and extending earlier insights of Vojta \cite{Vojta:1996}.

\medskip
\noindent
{\bf 
Absolute values, product formula and height functions.
}

\np\label{prod:form:assump}
Let $\KK$ be a number field with algebraic closure $\overline{\KK}$.  We let $M_{\KK}$ be the set of absolute values on $\KK$, which is constructed following the conventions of \cite{Bombieri:Gubler}.  In particular, $M_\KK$ satisfies the product formula with multiplicities equal to one
$$
\prod_{v \in M_\KK} |x|_v = 1 \text{, }
$$
for all $x \in \KK^\times$.  At times, we find it convenient to identify elements of $M_{\KK}$ with the places of $\KK$ that they determine.

\np\label{finite:extension:field:conventions} 
Let $\FF / \KK$ be a finite extension, with $\KK \subseteq \FF \subseteq \overline{\KK}$, and $w$ a place of $\FF$ that lies over $v \in M_\KK$.  Put
$$
||\cdot||_w := |\mathrm{N}_{\FF_w / \KK_v}(\cdot)|_v
$$
for 
$$\operatorname{N}_{\FF_w / \KK_v}(\cdot) \colon \FF_w \rightarrow \KK_v$$ 
the norm from $\FF_w$, the completion of $\FF$ with respect to $w$, to $\KK_v$ the completion of $\KK$ with respect to $v$.  In particular, setting
$$
|\cdot|_{w,\KK} := ||\cdot||_w^{\frac{1}{[\FF_w : \KK_v ]}} \text{,}
$$
we obtain an absolute value on $\FF$ that extends $|\cdot|_v$.

\np
Our conventions about height functions are similar to those of \cite{Grieve:2018:autissier}, \cite{Bombieri:Gubler} and \cite{Lang:Diophantine}.  For example, the logarithmic height of a $\KK$-rational point 
$$\mathbf{x} = [x_0:\dots:x_n] \in \PP^n(\KK)\text{, }$$ of projective $n$-space, with respect to the tautological line bundle $\Osh_{\PP^n}(1)$, is defined to be
\begin{equation}\label{log:height:functions}
h_{\Osh_{\PP^n}(1)}(\mathbf{x}) = \sum_{v \in M_\KK} \log \max_i |x_i|_v.
\end{equation}

\np
By pulling back the quantity \eqref{log:height:functions}, we obtain a concept of (logarithmic) height functions for polarized varieties $(X,L)$ over $\KK$.  In general, by writing an arbitrary line bundle, on $X$ and defined over $\KK$, as the difference of two ample line bundles, we obtain a concept of logarithmic height for each line bundle on $X$, defined over $\KK$. 

\medskip
\noindent
{\bf 
Birational divisors and fields of definition.
}

\np
Let $X$ be a geometrically integral, geometrically normal projective variety over $\KK$.  By a \emph{model} of $X$, we mean a proper birational morphism $Y \rightarrow X$, defined over $\KK$, where $Y$ is a geometrically integral, geometrically normal variety over $\KK$.  More generally, we may also consider models of $X$, or more precisely models of $X_\FF$, that are defined over $\FF$.  We let $\KK(X)$ denote the function field of $X$ and $\FF(X)$ the function field of $X_\FF$.

\np
We represent birational $\QQ$-divisors $\mathbb{D}$ over $X$ as an equivalence class of pairs $(Y,D)$, where $Y$ is a model of $X$ and where $D$ is a $\QQ$-Cartier divisor on $Y$.  In particular, $D$ is the \emph{trace} of $\mathbb{D}$ on $Y$.  Furthermore, the equivalence classes are those for which the equivalence relation is that generated by 
$(Y_1,D_1) \sim (Y_2,D_2)$ 
whenever $Y_1$ dominates $Y_2$ via a morphism 
$\phi \colon Y_1 \rightarrow Y_2$ 
and 
$D_1 = \phi^* D_2$. 
More generally, we may also consider birational divisors $\mathbb{D}$ over $X$, defined over $\FF$.

\medskip
\noindent
{\bf 
Birational Weil functions.
}

\np
We represent the \emph{local birational Weil function} of a birational divisor $\mathbb{D}$, with respect to a place $v$ of $\KK$, by $\lambda_{\mathbb{D},v}(\cdot)$.  Such functions may also be realized as equivalence classes of pairs 
$(Y,\lambda_{D,v}(\cdot))\text{,}$ 
for $(Y,D)$ a pair corresponding to $D$, and $\lambda_{D,v}(\cdot)$ a local Weil function for $D$ with respect to $v$.  

\np  Following the approach of \cite[Section 2.2.1]{Bombieri:Gubler}, the idea is to define local birational Weil functions by using equivalence classes of presentations.  By a \emph{presentation} of a b-Cartier divisor 
$\mathbb{D} = (Y, D)$ 
on $X$, we mean an equivalence class of presentations 
$\mathbf{D} = (Y, \mathcal{D})$.  
Here, 
$\mathcal{D} = (s_D;L;\mathbf{s};M;\mathbf{t})$ 
is a presentation for $D$ considered as a Cartier divisor on $Y$.

\np  The equivalence classes of presentations are those for the equivalence relation that is generated by
$
(Y_1,\mathcal{D}_1) \sim (Y_2,\mathcal{D}_2)
$
whenever $Y_1$ dominates $Y_2$ via a morphism
$
\phi \colon Y_1 \rightarrow Y_2
$
and where $\mathcal{D}_1$ is related to $\mathcal{D}_2$ via
$
\mathcal{D}_1 = \phi^* \mathcal{D}_2 \text{.}
$

\np  Fix a birational equivalence class of presentations denoted as
$[\mathcal{D}] = (Y, \mathcal{D}).$  
We then may form
$
[\lambda_{\mathcal{D}}(\cdot,v)] = (Y,\lambda_{\mathcal{D}}(\cdot,v)),
$
the birational equivalence class of local Weil functions.  Here, $\lambda_{\mathcal{D}}(\cdot,v)$ is the Weil function on $Y$ determined by $\mathcal{D}$.  Again, the equivalence relation is governed by
$
(Y_1,\lambda_{\mathcal{D}_1}(\cdot,v)) \sim (Y_2, \lambda_{\mathcal{D}_2}(\cdot,v))$
whenever
$
\lambda_{\mathcal{D}_1}(\cdot,v) = \phi^* \lambda_{\mathcal{D}_2}(\cdot, v) = \lambda_{\phi^* \mathcal{D}_2}(\cdot,v)
$
for $Y_1$ and $Y_2$ models of $X$ with $Y_1$ dominating $Y_2$ via a morphism 
$ \phi \colon Y_1 \rightarrow Y_2$ 
and, again, the presentations $\mathcal{D}_1$ and $\mathcal{D}_2$ are related by 
$\mathcal{D}_1 = \phi^* \mathcal{D}_2.$

\np  Finally, similar to \cite{Grieve:2018:autissier}, we may construct birational Weil functions $\lambda_{\mathbb{D},v}(\cdot)$ for $\mathbb{D}$ a birational divisor over $X$ and defined over $\FF$.  Indeed, let $w$ be  place of $\FF$ over $v$.  Furthermore, write 
$\mathbf{D} = (Y, \mathcal{D})$ 
for an equivalence class of presentations defined over $\FF$.  Again 
$$\mathcal{D} = (s; M, \mathbf{s}; N, \mathbf{t})$$
is a presentation of $\Osh_Y(D)$, for $Y$ a model of $X$ (defined over $\FF$).  From this perspective, the birational Weil function $\lambda_{\mathbb{D}, v}(\cdot)$ has shape
$$
\lambda_{\mathbb{D}}(\cdot,v) = \max_k \min_\ell \log 
\left| 
\frac{s_k}{t_\ell s} (\cdot) 
\right|_{v,\KK} 
= \max_k \min_\ell \log \left| \left| \frac{s_k}{t_\ell s} (\cdot) \right| \right|_w^{\frac{1}{[\FF_w : \KK_v ] }} \text{.}
$$

\section{Divisorial criterion for $\K$-instability and consequences for Vojta's Main Conjecture}\label{Vojta:Instability:Section}

\np
The purpose of this section, is to discuss Vojta's Main Conjecture within the context of (divisorial) $\K$-instability for Fano varieties.  In doing so, we build on ideas that are contained in a number of previous related results on this topic, including those from \cite{McKinnon-Roth}, \cite{Grieve:2018:autissier} and \cite{Ru:Vojta:2016}.  Throughout this section, we fix a finite set of places $S$ of $\KK$.  Again, $\FF / \KK$ denotes a finite extension of fields, $\KK \subseteq \FF \subseteq \overline{\KK}$.

\np
In more precise terms, let $(X,-\K_X)$ be a $\QQ$-Fano variety defined over $\KK$.  Let $E$ be a divisor over $X$ with field of definition $\FF$.  Fix a normal proper model $\pi \colon X' \rightarrow X_{\FF}$ which has the property that $E \subseteq X'$.  In this context, we observe how the quantity
\begin{equation}\label{beta:quantity}
\beta(-\K_X,E) := \int_0^{\infty} \frac{ \Vol_{X'}(-\pi^*\K_X - t E)}{ \Vol_X(-\K_X) } \mathrm{d}t
\end{equation}
may be used to establish instances of Vojta's Main Conjecture.  Recall, that this quantity \eqref{beta:quantity} admits a description as the first coordinate of the barycenter of the Okounkov body $\Delta(-\pi^*\K_X)$ computed with respect to a suitably defined admissible flag on $X'$ with divisorial component $E$.  We refer to \cite{Fujita:2016a} for a more detailed discussion on that topic.

\np  
Our main result in the direction of Vojta's Main Conjecture reads in the following way.

\begin{theorem}\label{Vojta:canonical:Fano:not:D:stable}
Let $\KK$ be a number field and fix a finite set of places $S$ of $\KK$.  Let $X$ be a $\QQ$-Fano variety defined over $\KK$ and let $E$ be a reduced, irreducible Cartier divisor over $X$ and defined over $\FF$.  Let $\mathbb{E}$ be the birational divisor that is determined by $E$.  Assume that 
$$
\beta(-\K_X,E) \geq 1 \text{.}
$$
Fix a birational divisor of the form
$$
\mathbb{D} = \mathbb{D}_1+\dots+\mathbb{D}_q
$$
which has the following property:  there exists a normal proper model $\pi \colon X' \rightarrow X_{\FF}$ so that if $\mathbb{D}_{i,X'}$, for $i = 1,\dots,q$, and $\mathbb{E}_{X'}$ denote the respective traces of $\mathbb{D}_i$ and $\mathbb{E}$ on $X'$, then:
\begin{enumerate}
\item[(i)]{
the divisors $\mathbb{D}_{1,X'},\dots,\mathbb{D}_{q,X'}, \mathbb{E}_{X'}$ are all linearly equivalent; and
}
\item[(ii)]{
the divisors $\mathbb{D}_{1,X'},\dots,\mathbb{D}_{q,X'}$ intersect properly on $X'$.}
\end{enumerate}
Then, with these assumptions, the inequalities predicted by Vojta's Main Conjecture hold true for $\mathbb{D}$.  Precisely, if $B$ is a big line bundle on $X$ and $\epsilon > 0$, then the inequality
\begin{equation}\label{Vojta:Main:Inequality:eqn:1}
\sum_{v \in S} \lambda_{\mathbb{D},v}(x) + h_{\K_X}(x) \leq \epsilon h_B(x) + \mathrm{O}(1)
\end{equation}
holds true for all $\KK$-rational points $x \in X(\KK) \setminus Z(\KK)$ and $Z \subsetneq X$ some proper Zariski closed subset defined over $\KK$.
\end{theorem}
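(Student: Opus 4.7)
The plan is to reduce the problem to the normal proper model $\pi \colon X' \to X_{\FF}$, invoke the arithmetic Autissier-type inequality of \cite[Corollary 1.3]{Grieve:2018:autissier} (equivalently \cite[Corollary 1.13]{Ru:Vojta:2016}) on $X'$ with big line bundle $L := -\pi^{*}\K_X$ and divisors $D_i := \mathbb{D}_{i,X'}$, and then to transport the resulting inequality back to $X$ via the birational Weil function formalism of Section 3. The critical input is that $L$ is big on $X'$ (being the pullback of the ample $\QQ$-Cartier divisor $-\K_X$ under a birational morphism), and that by hypothesis (i) together with the birational invariance of volumes built into the definition \eqref{beta:quantity},
$$
\beta(L, D_i) \;=\; \beta(-\K_X, E) \;\geq\; 1 \qquad \text{for each } i = 1, \dots, q.
$$

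\textbf{Applying the arithmetic general theorem.} Hypothesis (ii) secures the proper intersection of $D_1, \dots, D_q$ on $X'_{\FF}$, and hypothesis (i) supplies their mutual linear equivalence. The arithmetic Autissier-type inequality applies to this configuration and yields, for each $\epsilon' > 0$, a proper Zariski closed subset $Z' \subsetneq X'_{\FF}$ such that
$$
\sum_{w \in S_{\FF}} \sum_{i=1}^{q} \lambda_{D_i, w}(y) \;\leq\; (1 + \epsilon') \, h_L(y) + \mathrm{O}(1)
$$
for all $y \in X'(\FF) \setminus Z'(\FF)$; here $S_{\FF}$ denotes the places of $\FF$ lying over those in $S$, and the factor $(1+\epsilon')$ already absorbs the reciprocal $\beta^{-1} \leq 1$ that appears in the statement of the theorem.

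\textbf{Transporting back to $X$.} For $x \in X(\KK)$ inside the isomorphism locus of $\pi$ (a dense Zariski open subset, defined over $\KK$), the unique lift $y \in X'(\FF)$ satisfies $h_L(y) = h_{-\K_X}(x) + \mathrm{O}(1)$ by birational invariance of heights, while the local normalization $|\cdot|_{w,\KK}$ of Section 3, summed over $w \mid v$, converts the left-hand side above into $\sum_{v \in S} \lambda_{\mathbb{D}, v}(x)$. Consequently,
$$
\sum_{v \in S} \lambda_{\mathbb{D}, v}(x) + h_{\K_X}(x) \;\leq\; \epsilon' \, h_{-\K_X}(x) + \mathrm{O}(1).
$$
Since both $B$ and $-\K_X$ are big, the standard comparability of heights of big line bundles supplies a constant $C > 0$ and a proper $\KK$-closed subset $Z'' \subsetneq X$ with $h_{-\K_X}(x) \leq C\, h_B(x) + \mathrm{O}(1)$ on $X(\KK) \setminus Z''$. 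Choosing $\epsilon' = \epsilon / C$, and taking $Z$ to be the Galois-invariant hull of $\pi(Z') \cup Z''$ augmented by the (proper closed) complement of the isomorphism locus of $\pi$, produces the required inequality \eqref{Vojta:Main:Inequality:eqn:1}.

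\textbf{Main obstacle.} The substantive content of the argument is the Autissier-type inequality on $X'$, which works precisely because the threshold $\beta(-\K_X, E) \geq 1$ is available: this hypothesis is exactly what replaces the naive Schmidt/Autissier factor of $\dim X + 1$ by $1$, and it is only in that regime that $-\K_X$ can be absorbed entirely into $h_{\K_X}$ with only an $\epsilon\, h_B$ error. The remaining work---the $\FF/\KK$ descent of Weil functions and exceptional subsets, the treatment of the indeterminacy of $\pi^{-1}$, and the passage from $h_{-\K_X}$ to $h_B$---is expected to be routine within the framework developed in Section 3.
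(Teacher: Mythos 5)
Your proof is correct and follows essentially the same route as the paper's: both pass to the model $X'$, invoke the Ru--Vojta/Grieve Arithmetic General Theorem there to obtain a bound with coefficient $1/\beta(-\K_X,E)+\epsilon'$, use $\beta(-\K_X,E)\geq 1$ to collapse this to $1+\epsilon'$, and then pull back to $X$ and replace $h_{-\K_X}$ by $\epsilon h_B$ via comparability of heights of big line bundles. The only cosmetic difference is that you cite \cite[Corollary 1.3]{Grieve:2018:autissier} while the paper applies \cite[Theorem 1.1]{Grieve:2018:autissier} directly and cites \cite[Proof of Theorem 5.1]{Grieve:2018:autissier} for the $h_B$-to-$h_{-\K_X}$ reduction.
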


\begin{proof}
By assumption, and by birational invariance of $\beta(-\K_X,E)$, we have that
\begin{equation}\label{Vojta:Main:Inequality:eqn:2}
\beta(-\K_X,E) = \int_0^{\infty} \frac{ \Vol_{X'}(- \pi^* \K_{X'} - t \mathbb{E}_{X'})}{ \Vol_X(-\K_X) } \mathrm{d}t \geq 1 \text{.}
\end{equation}

We now note that, without loss of generality, by arguing as in \cite[Proof of Theorem 5.1]{Grieve:2018:autissier}, to establish the desired inequality \eqref{Vojta:Main:Inequality:eqn:1}, it suffices to establish, for each given $\epsilon > 0$, the inequality
\begin{equation}\label{Vojta:Inequality:Eqn:reduction}
\sum_{v \in S} \lambda_{\mathbb{D}, v}(x) + h_{\K_X}(x) \leq \epsilon h_{-\K_X}(x) + \mathrm{O}(1)
\end{equation}
for all $\KK$-rational points $x \in X$ outside of some proper Zariski closed subset $Z \subsetneq X$.

To establish \eqref{Vojta:Inequality:Eqn:reduction}, note that it is implied by the inequality
\begin{equation}\label{Vojta:Inequality:Eqn:reduction'}
\sum_{v \in S} \lambda_{\mathbb{D}_{X'},v}(x') \leq (\epsilon + 1)h_{- \pi^* \K_{X}}(x') + \mathrm{O}(1)
\end{equation}
for all $\KK$-rational points $x' \in X'$ outside of some proper Zariski closed subset $Z' \subsetneq X'$.

By linear equivalence of each of the divisors $\mathbb{D}_{1,X'},\dots,\mathbb{D}_{q,X'}, \mathbb{E}_{X'}$, we deduce equality of asymptotic volume constants 
$$
\beta(-\K_X, E) = \beta(-\K_X, \mathbb{E}_{X'}) =  \beta(- \K_X, \mathbb{D}_{i,X'}) \text{, }
$$
for $i = 1, \dots, q$.  In particular, upon applying the Main Arithmetic General Theorem \cite[Theorem 1.1]{Grieve:2018:autissier}, compare also with \cite{Ru:Vojta:2016}, we obtain that the inequality
\begin{equation}\label{Main:Arithmetic:General:Theorem}
\sum_{v \in S} \lambda_{\mathbb{D}_{X'},v}(x') \leq \left( \frac{1}{\beta(-\K_X, E)}  + \epsilon \right) h_{- \pi^* \K_X}(x') + \mathrm{O}(1)
\end{equation}
holds true for all $x' \in X'$ outside of some proper Zariski closed subset $Z' \subsetneq X'$, and defined over $\KK$.

By assumption, we have the inequality \eqref{Vojta:Main:Inequality:eqn:2}; it follows that the righthand side of the inequality \eqref{Main:Arithmetic:General:Theorem} is at most the righthand side of the inequality \eqref{Vojta:Inequality:Eqn:reduction'}.  In particular, by combining these inequalities \eqref{Main:Arithmetic:General:Theorem} and \eqref{Vojta:Inequality:Eqn:reduction'}, we obtain the inequality
\begin{align}\label{Vojta:Inequality:Eqn:reduction''}
\begin{split}
\sum_{v \in S} \lambda_{\mathbb{D}_{X'},v}(x') 
& 
\leq 
\left( \frac{1}{\beta(-\K_X, E)} + \epsilon \right) h_{- \pi^* \K_X}(x') + \mathrm{O}(1) 
\\
&
\leq
(1 + \epsilon) h_{-\pi^* \K_X}(x') + \mathrm{O}(1)
\end{split}
\end{align}
for all $\KK$-rational points $x' \in X'$ outside of some proper Zariski closed subset $Z' \subsetneq X'$.  

The inequality \eqref{Vojta:Inequality:Eqn:reduction''} establishes the inequality \eqref{Vojta:Inequality:Eqn:reduction'} and thus implies the desired inequality which is given by \eqref{Vojta:Main:Inequality:eqn:1}.
\end{proof}

\np  
Finally, we establish our main result in the direction of Vojta's Main Conjecture within the context of $\K$-stability for Fano varieties.  Recall that we stated that result as Theorem \ref{Vojta:canonical:Fano:not:K:stable}.  In particular, this result may also be seen as an improvement to the known previous existing results on this topic, for example \cite[Theorem 10.1]{McKinnon-Roth}, \cite[Corollary 1.3]{Grieve:2018:autissier} and \cite[Corollary 1.13]{Ru:Vojta:2016}.  

\begin{proof}[Proof of Theorem {\ref{Vojta:canonical:Fano:not:K:stable}}]
By assumption, $X$ is a $\QQ$-Fano variety with canonical singularities and is not $\K$-stable.  By Corollary \ref{canonical:Fano:Not:K:Stable}, it follows that
$$
\beta(-\K_X, E) \geq 1
$$
for at least one irreducible and reduced Cartier divisor $E$ over $X$ with field of definition some finite field extension $\FF / \KK$, $\KK \subseteq \FF \subseteq \overline{\KK}$.  In particular, the hypothesis of Theorem  \ref{Vojta:canonical:Fano:not:D:stable} is satisfied and thus the conclusion that is desired by Theorem \ref{Vojta:canonical:Fano:not:K:stable} holds true.
\end{proof}

\end{document}